\theoremstyle{plain}
\newtheorem{theorem}{Theorem}[section]
\newtheorem{lemma}[theorem]{Lemma}
\newtheorem*{lemma*}{Lemma}
\theoremstyle{definition}
\theoremstyle{remark}
\newtheorem*{remark}{Remark}
\newcommand\numberthis{\stepcounter{equation}\tag{\theequation}}
\newcommand{\ssum}[1]{\sum_{\substack{#1}}}
\newcommand{\e}{{\rm e}}
\newcommand{\dd}{{\rm d}}
\newcommand{\eps}{{\varepsilon}}
\newcommand{\R}{{\mathbb R}}
\newcommand{\N}{{\mathbb N}}
\newcommand{\Z}{{\mathbb Z}}
\newcommand{\1}{{\mathbf 1}}
\newcommand{\cR}{{\mathcal R}}
\DeclareMathOperator{\cond}{cond}
\newcommand{\vphi}{{\varphi}}
\newcommand{\chit}{{\tilde \chi}}
\newcommand{\qt}{{\tilde q}}
\newcommand{\mut}{{\tilde \mu}}
\renewcommand{\mod}[1]{\ ({\rm mod\ }#1)}
\renewcommand\Re{\operatorname{\mathfrak{Re}}}
\renewcommand\Im{\operatorname{\mathfrak{Im}}}
\numberwithin{equation}{section}
\title[First moment of primes in arithmetic progressions]
{The first moment of primes in arithmetic progressions: Beyond the Siegel-Walfisz range}
\author{Sary Drappeau}
\address{Aix Marseille Université, CNRS, Centrale Marseille, I2M UMR 7373, 13453, Marseille, France}
\email{sary-aurelien.drappeau@univ-amu.fr}
\author{Daniel Fiorilli}
\address{CNRS, Université Paris-Saclay, Laboratoire de mathématiques d'Orsay, 91405, Orsay, France.}
\address{D\'epartement de math\'ematiques et de statistique, Universit\'e d'Ottawa, 585 King Edward, Ottawa, Ontario, K1N 6N5, Canada}
\email{daniel.fiorilli@universite-paris-saclay.fr}
\date{\today}
\subjclass[2010]{11N13 (Primary); 11N37, 11M20 (Secondary)}
\begin{document}

\begin{abstract}
  We investigate the first moment of primes in progressions
  $$ \ssum{q\leq x/N \\ (q, a)=1}  \Big(\psi(x; q, a) - \frac x{\vphi(q)}\Big) $$
  as~$x, N \to \infty$. We show unconditionally that, when~$a=1$, there is a significant bias towards negative values, uniformly for $N\leq \e^{c\sqrt{\log x}}$. The proof combines recent results of the authors on the first moment and on the error term in the dispersion method. 
More generally, for $a \in \mathbb Z\setminus\{0\}$ we prove estimates that take into account the potential existence (or inexistence) of Landau-Siegel zeros.
\end{abstract}

\maketitle

\section{Introduction}

The distribution of primes in arithmetic progressions is a widely studied topic, in part due to its links with binary additive problems involving primes, see \emph{e.g.}~\cite[Chapter~19]{IwaniecKowalski2004} and~\cite{Linnik1963a}. For all~$n\in\N$ we let~$\Lambda$ denote the von Mangoldt function,
and for a modulus~$q\in\N$ and a residue class~$a\mod{q}$ we define
$$ \psi(x;q, a) := \ssum{n\leq x \\ n\equiv a\mod{q}} \Lambda(n). $$
In the work~\cite{Fiorilli2012a}, the second author showed the existence, for certain residue classes~$a$, of an unexpected bias in the distribution of primes in large arithmetic progressions, on average over~$q$. An important ingredient in this result is the dispersion estimates of Fouvry~\cite{Fouvry1985} and Bombieri-Friedlander-Iwaniec~\cite{BombieriFriedlanderEtAl1986}; these involve an error term which restricts the range of validity of~\cite[Theorem 1.1]{Fiorilli2012a}. Recently, this error term was refined by the first author in~\cite{Drappeau2017}, taking into account the influence of potential Landau-Siegel zeros. This new estimate allows for an extension of the range of validity of~\cite[Theorem 1.1]{Fiorilli2012a}, which is the object of the present paper. 
In particular, we quantify and study the influence of possible 
Landau-Siegel zeros, and we show that, in the case~$a=1$, a bias subsists \emph{unconditionally} in a large range. Here is our main result.

\begin{theorem}\label{thm:maina=1}
There exists an absolute constant $\delta >0$ such that for any fixed $\eps>0$ and in the range~$1\leq N\leq \e^{\delta\sqrt{\log x}}$, we have the upper bound
  
\begin{equation}
   \frac Nx\ssum{q\leq x/N }  \Big(\psi(x; q, 1) - \frac x{\vphi(q)}\Big) \leq -\frac {\log N}2 -C_0 + O_\eps(N^{-\frac{171}{448}+\eps}),
   \label{equation:maina=1}
\end{equation}  
 with an implicit constant depending effectively on~$\eps$, and where 
\begin{equation*}
  C_0:= \frac 12 \left(\log 2\pi + \gamma +\sum_p \frac {\log p}{p(p-1)}+1 \right).
\end{equation*}
  
\end{theorem}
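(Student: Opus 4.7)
The strategy is to follow Fiorilli's \cite{Fiorilli2012a} approach with the refined dispersion estimate of Drappeau \cite{Drappeau2017} as the key new input. Since $(q,1)=1$ for every $q$, the left-hand side of the theorem equals
\[
\frac{N}{x}\Bigl(\sum_{q \leq x/N} \psi(x;q,1) - x\,G(x/N)\Bigr), \qquad G(Q) := \sum_{q \leq Q} \frac{1}{\varphi(q)},
\]
and the standard asymptotic
\[
G(Q) = \log Q + \gamma + \sum_p \frac{\log p}{p(p-1)} + O\bigl(Q^{-1}\log Q\bigr)
\]
captures part of the constant $C_0$. It therefore remains to produce an asymptotic for $\sum_{q \leq x/N} \psi(x;q,1)$ with the explicit bias and a power-saving error in $N$.

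The natural path is to decompose $\psi(x;q,1) = \varphi(q)^{-1}\sum_\chi \bar\chi(1)\psi(x,\chi)$ over Dirichlet characters modulo $q$, exchange summations, and land on an expression indexed by primitive characters $\chi^*$, weighted by a smooth arithmetic function of their conductor. The principal-character contribution produces the main term $x\,G(x/N)$ together with an explicit secondary term of size $x$; the hypothetical real-character contribution, tied to a potential Landau--Siegel zero, is sign-definite for $a=1$ because $\chi(1)=1$ forces this contribution to be negative; the remaining non-principal characters are handled collectively via dispersion. Splitting the moduli at the Siegel--Walfisz threshold $Q_0 \approx \exp(c\sqrt{\log x})$, one applies the effective prime number theorem in arithmetic progressions below $Q_0$ and invokes \cite{Drappeau2017} above, the latter being the source of the power-saving exponent $N^{-171/448+\varepsilon}$.

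Collecting all contributions and matching against $x\,G(x/N)$, the explicit bias $-\log N/2$ and the constant $C_0$ emerge after careful evaluation of the secondary main terms; the factor $1/2$ here has the flavor of a Chebyshev-type bias arising from the interaction between divisor truncation at $x/N$ and the contribution of real characters. The principal obstacle is the quantitative dispersion estimate: pushing the modulus range up to $x/N$ with $N$ as large as $\exp(\delta\sqrt{\log x})$, while maintaining a power-saving error in $N$, requires the full strength of \cite{Drappeau2017}, which rests on Deshouillers--Iwaniec spectral theory of Kloosterman sums together with a careful isolation of any Landau--Siegel-zero contribution. It is precisely the sign-definiteness of this Siegel contribution for $a=1$ that permits the \emph{inequality} form of the statement: any such contribution, if present, would only strengthen the upper bound, making the theorem genuinely unconditional.
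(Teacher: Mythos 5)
The overall strategy you identify is aligned with the paper's---isolate the potential Siegel-character contribution, recognize that for $a=1$ it is sign-definite (since $\chit(1)=1$ forces $\eta_{x,1}>0$), and use that sign-definiteness to convert a conditional asymptotic into an unconditional inequality. That key structural insight is exactly right and is the content of the paper's final section.

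However, there is a genuine gap in the way you propose to control the non-principal characters, and two misattributions that matter. You say the dispersion estimate of \cite{Drappeau2017} "pushes the modulus range up to $x/N$" and that one splits the moduli at $Q_0 \approx \exp(c\sqrt{\log x})$, applying effective PNT below $Q_0$ and dispersion above. In fact no dispersion estimate reaches moduli $q$ of size $x/N$ when $N\leq \e^{\delta\sqrt{\log x}}$; \cite[Theorem~6.2]{Drappeau2017} is applied in the paper only for $q\leq x^{1/2+\delta}$ (this is the term $T_1$ in \eqref{equation initial decomposition}). The moduli in the critical range $(x^{1/2+\delta}, x/N]$ are handled not by dispersion but by \emph{divisor switching} (Lemma~\ref{lemma T2 T3}): one rewrites $n\equiv a\mod q$ as $n=a+qr$, which converts the long sum over $q$ into a short sum over $r\leq N$ where Bombieri--Vinogradov (in the Siegel-zero-aware form of Lemma~\ref{lem:BV-sqrtlog}) applies. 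Without that step your character decomposition has no way to treat moduli close to $x$, and this is the heart of the whole argument. Relatedly, the power-saving exponent $N^{-171/448+\eps}$ is not supplied by the dispersion estimate---Drappeau's result gives an error of size $x\e^{-\delta\sqrt{\log x}}$ whose role is to permit $N$ up to $\e^{\delta\sqrt{\log x}}$; the exponent $171/448$ arises from a contour shift and Bourgain's subconvexity bound for $\zeta$ (Lemma~\ref{lemma shift to 1/2}). Finally, the bias $-\tfrac12\log N - C_0$ does not stem from the real-character contribution; it is the term $\mu(1,N)$ coming from the Mellin/residue analysis of mean values of $1/\vphi$ after divisor switching, unrelated to Siegel zeros. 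Your stated asymptotic for $G(Q)=\sum_{q\leq Q}1/\vphi(q)$ is also off (the leading coefficient is $\zeta(2)\zeta(3)/\zeta(6)$, not $1$, and the additive constant is different); the precise constants in $C_0$ only come out after the careful cancellations in Lemmas~\ref{lemma Fouvry}--\ref{lem:approx-D-at1}.
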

In other words, there is typically a negative bias towards the class $a=1$ in the distribution of primes in arithmetic progressions modulo $q$.
One could ask whether Theorem~\ref{thm:maina=1} could be turned into an asymptotic estimate. To do so we would need to rule out the existence of Landau-Siegel zeros, because if they do exist, then we find in Theorem~\ref{theorem main} below that the left hand side of~\eqref{equation:maina=1} is actually much more negative. 

In order to explain our more general result, we will need to introduce some notations and make a precise definition of Landau-Siegel zeros. We begin by recalling~\cite[Theorem 1.1]{Fiorilli2012a}.
For $N\geq 1$ and $a\in \Z\setminus \{0\}$ we define
$$ M_1(x, N ; a) = \ssum{q\leq x/N \\ (q, a)=1}  \Big(\psi^*(x; q, a) - \frac x{\vphi(q)}\Big), $$
where\footnote{Note that we have excluded the first term because it has a significant contribution which is trivial to estimate.} $$\psi^*(x;q,a) = \ssum{ 1\leq n\leq x \\ n\equiv a \bmod q \\ n\neq a} \Lambda(n). $$
With these notations, \cite[Theorem 1.1]{Fiorilli2012a} states\footnote{The improved exponent is deduced by applying Bourgain's work \cite{Bourgain2017}.} that for~$N\leq (\log x)^{O(1)}$ 
\begin{equation}\label{eq:result-DF}
  \frac {M_1(x,N;a)}{\frac{\phi(|a|)}{|a|} \frac xN} = \mu(a,N)+O_{a,\eps,B} \left(N^{-\frac {171}{448}+\eps}\right)
\end{equation}
with 
\begin{equation}
  \mu(a,N):=  \begin{cases}
    -\frac 12 \log N - C_0 &\text{ if } a=\pm 1\\
    -\frac 12 \log p &\text{ if } a=\pm p^e \\
    0 &\text{ otherwise.}
  \end{cases}
  \label{equation definition of mu}
\end{equation}

We recall the following classical theorem of Page.

\begin{theorem}[{\cite[Theorems~5.26 and~5.28]{IwaniecKowalski2004}}]
  There is an absolute constant~$b>0$ such that for all~$Q, T \geq 2$, the following holds true. The function~$s\mapsto \prod_{q\leq Q} \prod_{\chi\mod{q}} L(s, \chi)$ has at most one zero~$s=\beta$ satisfying~$\Re(s)>1-b/\log(QT)$ and~$\Im(s)\leq T$. If it exists, the zero~$\beta$ is real and it is the zero of a unique function~$L(s, \chit)$ for some primitive real character~$\chit$.
\end{theorem}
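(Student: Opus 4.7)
The plan is to prove the statement in two stages: first establish a zero-free region for each individual function~$L(s,\chi)$ with~$\chi \mod q$, allowing only one possible real exceptional zero of a real character~$\tilde\chi$; then show that across all moduli~$q \leq Q$, at most one such exceptional real zero can exist in the region~$\Re(s) > 1 - b/\log(QT)$.

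For the first stage, I would use the classical non-negativity trick
\[
3 + 4\cos\theta + \cos 2\theta = 2(1+\cos\theta)^2 \geq 0,
\]
applied via Mertens' argument to
\[
- \Re\Bigl(3\frac{\zeta'}{\zeta}(\sigma) + 4 \frac{L'}{L}(\sigma+it,\chi) + \frac{L'}{L}(\sigma+2it,\chi^2)\Bigr) \geq 0
\]
for~$\sigma > 1$. Using the pole of~$\zeta$ at~$s=1$, a convexity bound for~$L(s,\chi)$ in the critical strip, and the partial-fraction expansion of~$L'/L$, one obtains an upper bound of size~$O(\log(q(|t|+2)))$ on the right side, while any hypothetical zero~$\rho = \beta + i\gamma$ of~$L(s,\chi)$ contributes a term~$4/(\sigma - \beta)$. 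Choosing~$\sigma = 1 + \alpha/\log(q(|t|+2))$ for small~$\alpha$ yields the desired zero-free region, except that when~$\chi^2$ is principal (i.e.\ $\chi$ real) and~$t$ is very small, the argument degenerates at~$t=0$ and fails to exclude a single real zero.

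For the second stage, the key input is the positivity of Dirichlet coefficients of a product of L-functions attached to real characters. Suppose for contradiction that two distinct primitive real characters~$\chi_1 \mod{q_1}$ and~$\chi_2 \mod{q_2}$, with~$q_1, q_2 \leq Q$, both have real zeros~$\beta_1, \beta_2 > 1 - b/\log(QT)$. Consider the auxiliary function
\[
F(s) := \zeta(s) L(s,\chi_1) L(s,\chi_2) L(s,\chi_1\chi_2),
\]
which (since the product~$\chi_1\chi_2$ is again a nontrivial real character, as the~$\chi_i$ are distinct primitive) is a Dirichlet series with non-negative coefficients and a simple pole at~$s=1$. The two zeros~$\beta_1$ and~$\beta_2$ of~$F$ near~$s=1$ force~$F$ to be small there, while the positivity of the coefficients combined with~$F(2) \geq 1$ and a Hadamard three-circles or direct integration argument (Landau's trick) yields a lower bound on~$F$ just left of~$s=1$. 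Comparing these estimates contradicts the hypothesis if~$b$ is taken sufficiently small.

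The main obstacle is the second stage: extracting a usable lower bound for~$F(s)$ near~$s=1$ from the non-negativity of its Dirichlet coefficients, and making the numerology work so that a single absolute constant~$b$ suffices uniformly in~$Q$ and~$T$. The first stage is essentially bookkeeping once the positivity trick is set up, whereas the uniqueness of the exceptional zero requires the more delicate Landau argument based on the factorization~$F(s)$, together with care that~$\chi_1 \chi_2$ is indeed a nontrivial primitive-like real character so that~$L(s,\chi_1\chi_2)$ contributes no extra pole.
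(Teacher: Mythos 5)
The paper does not prove this statement; it is quoted verbatim from Iwaniec--Kowalski (Theorems~5.26 and~5.28), so there is no ``paper's proof'' to compare against. Your two-stage plan does follow the structure of the cited source: the first stage is the classical zero-free region obtained from the~$3+4\cos\theta+\cos 2\theta\geq 0$ inequality (which is the content of IK~Theorem~5.26), and the second stage is the Landau--Page uniqueness argument built on the auxiliary function~$F(s)=\zeta(s)L(s,\chi_1)L(s,\chi_2)L(s,\chi_1\chi_2)$ with non-negative Dirichlet coefficients (IK~Theorem~5.28). Your observation that~$\chi_1\chi_2$ is real and non-principal because~$\chi_1\neq\chi_2$ are distinct primitive real characters is exactly the point that ensures~$F$ has only the simple pole at~$s=1$ coming from~$\zeta$.

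There is, however, an imprecision in your second stage that, taken literally, would break the argument. You claim that positivity of the coefficients plus~$F(2)\geq 1$ yields ``a lower bound on~$F$ just left of~$s=1$.'' Since~$F$ has a simple pole at~$s=1$ with positive residue~$\rho=L(1,\chi_1)L(1,\chi_2)L(1,\chi_1\chi_2)$, one has~$F(\sigma)\to-\infty$ as~$\sigma\to 1^-$; in fact, for~$\max(\beta_1,\beta_2)<\sigma<1$ the factorisation~$F(s)=\frac{(s-\beta_1)(s-\beta_2)}{s-1}H(s)$ forces~$F(\sigma)<0$, so there is no positive lower bound ``just left of~$1$.'' Moreover, a Landau-type lower bound in this region would necessarily involve the residue~$\rho$, for which no useful lower bound is available without importing something like Siegel's theorem; this would also destroy the absolute, effective nature of the constant~$b$. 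The argument that actually closes stage~2 (and what ``Landau's trick'' should mean here) works with the logarithmic derivative: the coefficients of~$-F'/F(\sigma)=\sum_n\Lambda(n)(1+\chi_1(n))(1+\chi_2(n))n^{-\sigma}$ are non-negative for~$\sigma>1$, while the partial-fraction/Hadamard expansion gives~$-F'/F(\sigma)=\frac{1}{\sigma-1}-\frac{1}{\sigma-\beta_1}-\frac{1}{\sigma-\beta_2}+O(\log Q T)$. Non-negativity then forces
\[
\frac{1}{\sigma-\beta_1}+\frac{1}{\sigma-\beta_2}\leq\frac{1}{\sigma-1}+O(\log QT),
\]
and choosing~$\sigma=1+\alpha/\log(QT)$ with~$\alpha$ a small absolute constant yields a contradiction if both~$\beta_i>1-b/\log(QT)$ with~$b$ small enough. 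This makes no reference to~$\rho$ and is why the constant~$b$ can be taken absolute. With this replacement your outline is a faithful account of the standard proof.
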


Given~$x\geq 2$, we will say that the character~$\chit\mod{\qt}$ is~$x$-exceptional if the above conditions are met with~$Q=T=\e^{\sqrt{\log x}}$. There is at most one such character.

By the analytic properties of Dirichlet $L$-functions, if exceptional zeros exist, their effect can often be quantified in a precise way, and are expected to lead to secondary terms in asymptotic formulas. For instance, it is known~\cite[Corollary~11.17]{MontgomeryVaughan2006} that if the~$x$-exceptional character exists, then there is a distortion in the distribution of primes in the sense that
\begin{align*}
  \psi(x ; q, a) ={}& \frac{x}{\vphi(q)} - \chit(a)\1_{\qt \mid q} \frac{x^\beta}{\beta \vphi(q)} + O(x\e^{-c\sqrt{\log x}})  \numberthis\label{eq:psi-SW-2terms}\\
  ={}& \frac{x}{\vphi(q)}(1-\eta_{x,a} \1_{\qt\mid q}) + O(x\e^{-c\sqrt{\log x}}) \numberthis\label{eq:psi-SW}
\end{align*}
with
\begin{equation}
 \eta_{x,a} := \frac{\chit(a)}{\beta x^{1-\beta}} \in (-1, 1). 
 \label{equation definition eta}
\end{equation}

We are now ready to state our more general result. As we will see, the secondary term in~\eqref{eq:psi-SW-2terms} can potentially yield a large contribution to $M_1(x, N; a)$ for~$N$ considerably larger than~$\qt$. For this reason, it is relevant to consider instead the expression
\begin{equation}
  M_1^Z(x, N ; a) = \ssum{q\leq x/N \\ (q, a)=1}  \Big(\psi^*(x; q, a) - (1-\1_{\qt\mid q}\eta_{x,a}) \frac x{\vphi(q)} \Big),\label{eq:def-M1Z}
\end{equation}
where, by convention, the term involving~$\eta_{x,a}$ is only to be taken into account when the~$x$-exceptional character exists.

Our results show that, in the case of the hypothetical two-term approximation~\eqref{eq:def-M1Z}, there is a new bias term, which results from the contribution of the possible $x$-exceptional character.

\begin{theorem}
  \label{theorem main}
  Fix an integer~$a\in \Z\setminus \{0\}$ and a small enough positive absolute constant $\delta$, and let~$x\geq 2$ and~$2\leq N\leq \e^{\delta\sqrt{\log x}}$.
  \begin{enumerate}[(i)]
    \item If there is no~$x$-exceptional character, then
    \begin{equation}\label{eq:main-estim-1}
      \frac {M_1(x,N;a)}{\frac{\phi(|a|)}{|a|} \frac xN} = \mu(a,N)+O_{a,\eps} \left(N^{-\frac {171}{448}+\eps}\right).
    \end{equation}
    \item If the~$x$-exceptional character~$\chit\mod{\qt}$ exists, then with~$C_{a,\qt}$ and~$D_{a,\qt}$ as in~\eqref{eq:def-Caq} and~\eqref{eq:def-Daq} below,
    \begin{equation}\label{eq:main-estim-2}
      \begin{aligned}
        \frac{M_1^Z(x, N; a)}{\frac{\vphi(|a|)}{|a|} \frac xN} = \mu(a, N) + {}& N \eta_{x,a} \Big(\ssum{r\leq N \\ (r, a)=1 \\ \qt \mid r} \frac{1 - (\frac rN)^\beta}{\vphi(r)} - C_{a,\qt}\Big\{\log\Big(\frac N\qt\Big) + D_{a,\qt} - \frac1\beta\Big\}\Big) \\
        {}& + O_{a,\eps} \left(N^{-\frac {171}{448}+\eps}\right).
      \end{aligned}
    \end{equation}
    \item If the~$x$-exceptional character exists and~$N\geq \qt$, then the previous formula admits the approximation
    \begin{equation}\label{eq:main-estim-3}
      \frac{M_1^Z(x, N; a)}{\frac{\vphi(|a|)}{|a|} \frac xN} = (1-\eta_{x,a}) \mu(a, N) +  \eta_{x,a} \mut_\chit(a) + O_{a,\eps}\Big(N^\eps (N/\qt)^{-\frac{171}{448}}+ (\log N)^2\frac{1-\beta}{x^{(1-\beta)/2}}\Big),
    \end{equation}
    where~$\mut_\chit(a) = \tfrac12 \1_{a=\pm 1}(\log\qt - \sum_{p\mid\qt} \frac{\log p}{p})$.
  \end{enumerate}
\end{theorem}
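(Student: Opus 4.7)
The plan is to combine the first-moment computation of~\cite[Theorem~1.1]{Fiorilli2012a} with the sharpened dispersion estimate of~\cite{Drappeau2017}, the latter of which now isolates the potential~$x$-exceptional character within a power-saving error term. The starting point is to rewrite the inner sum in~$M_1^Z(x,N;a)$ by interchanging summation,
\[
\ssum{q\leq x/N\\(q,a)=1}\psi^*(x;q,a) \;=\; \ssum{n\leq x\\n\neq a\\(n-a,a)=1}\Lambda(n)\,\#\{q\leq x/N : q\mid n-a\},
\]
and to split the count over~$q$ at~$q\sim\sqrt x$ via the hyperbola identity, so that one of the two conjugate factors is always of size at most~$\sqrt{x}$.

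On the small-modulus side~$q\leq\sqrt x$, I would apply~\eqref{eq:psi-SW-2terms}, which produces both the expected main term~$x/\vphi(q)$ and the correction~$\chit(a)\1_{\qt\mid q}x^\beta/(\beta\vphi(q))$ due to the exceptional zero; the subtraction~$\1_{\qt\mid q}\eta_{x,a}$ built into the definition~\eqref{eq:def-M1Z} is designed precisely to absorb this correction. On the large-modulus side, I would invoke the dispersion estimate of~\cite{Drappeau2017} (strengthened by Bourgain's bound~\cite{Bourgain2017} to obtain the exponent~$\tfrac{171}{448}$), which yields a power-saving error~$O_{a,\eps}((x/N)N^{-171/448+\eps}\vphi(|a|)/|a|)$ once the Siegel-zero correction has been factored out. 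The convolution sums in the main term are then evaluated through standard Mertens-type estimates, exactly reproducing~$\mu(a,N)$ from~\cite{Fiorilli2012a} and proving~(i). In the exceptional case, an additional secondary term arises from~$L(s,\chit)$ at its zero~$s=\beta$; writing~$q=\qt r$, this term takes the shape
\[
\eta_{x,a}N\ssum{r\leq N\\(r,a)=1\\\qt\mid r}\frac{1-(r/N)^\beta}{\vphi(r)},
\]
while~$C_{a,\qt}$ and~$D_{a,\qt}$ in~\eqref{eq:main-estim-2} come from the evaluation at~$s=1$ of the associated Dirichlet series~$\sum_{(q,a)=1,\,\qt\mid q}1/(\vphi(q)q^{s-1})$, with local correction factors at primes dividing~$a\qt$.

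For~(iii), when~$N\geq\qt$ the arithmetic progression~$\qt\mid r$ has non-trivial density~$1/\qt$ in~$[1,N]$, and I would expand~$(r/N)^\beta$ around~$\beta=1$ using the uniform estimate~$a^\beta-a = a(1-\beta)\log(1/a)+O(a(1-\beta)^2\log^2(1/a))$ valid for~$a=r/N\in(0,1]$. Partial summation combined with Mertens' theorem on the progression~$r\equiv 0\mod{\qt}$ then collapses the secondary term into the compact expression~$\eta_{x,a}\mut_\chit(a)$, up to an error~$(\log N)^2(1-\beta)/x^{(1-\beta)/2}$ coming from the Taylor remainder together with the factor~$x^{\beta-1}$ implicit in~$\eta_{x,a}$; the combinatorial cancellation between the~$N\eta_{x,a}(\cdots)$ piece and~$\mu(a,N)$ produces the prefactor~$(1-\eta_{x,a})$ in front of~$\mu(a,N)$.

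The main obstacle is the bookkeeping of the exceptional-character contributions: ensuring that the~$\eta_{x,a}$ subtraction in~\eqref{eq:def-M1Z} matches the output of both the Siegel--Walfisz approximation on small moduli and the dispersion bound on large moduli, without leaving behind mismatched polar or residue terms. A related subtlety is the crossover at~$q\sim\sqrt x$: the two sources of error (the Siegel--Walfisz decay~$\e^{-c\sqrt{\log x}}$ and the dispersion saving~$N^{-171/448+\eps}$) must both be absorbed into a single bound in the range~$N\leq\e^{\delta\sqrt{\log x}}$, which constrains the admissible~$\delta$. Finally, part~(iii) requires a uniform control in~$\qt\leq N$ of the Siegel-zero remainder, which delicately balances~$1-\beta$ against~$x^{1-\beta}$ to yield the stated error term.
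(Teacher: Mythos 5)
Your proposal captures the broad shape of the argument — isolating the exceptional-character contribution, applying a hyperbola/divisor-switching decomposition, and evaluating the resulting sums of multiplicative functions to produce~$\mu(a,N)$ and the secondary $\eta_{x,a}$-term. But two of the key tools are misassigned in a way that would make the proof break down.

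First, applying the individual Siegel--Walfisz estimate~\eqref{eq:psi-SW-2terms} termwise for~$q\leq\sqrt x$ cannot work: the error there is~$O(x\e^{-c\sqrt{\log x}})$ \emph{per modulus}, so summing over~$\sim\sqrt x$ moduli yields an error of order~$x^{3/2}\e^{-c\sqrt{\log x}}$, which dwarfs the target~$O(x N^{-1-171/448+\eps})$ by a power of~$x$. The paper instead treats the entire range~$q\leq x^{1/2+\delta}$ at once with the dispersion estimate of~\cite[Theorem~6.2]{Drappeau2017} (valid for a fixed residue class~$a$, no absolute values, with the exceptional contribution already factored out), getting a total error~$O(x\e^{-\delta\sqrt{\log x}})$; this is the sum~$T_1$ in~\eqref{equation initial decomposition}. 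The complementary range~$q>x^{1/2+\delta}$ is handled by divisor switching, which converts it into a sum over complementary divisors~$r$ with~$r\leq x^{1/2-\delta}$ (for~$T_2$) and~$r<N$ (for~$T_3$), and \emph{those} sums are controlled by a Bombieri--Vinogradov theorem with the exceptional zero removed (Lemma~\ref{lem:BV-sqrtlog}), not by~\eqref{eq:psi-SW-2terms}.

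Second, the power-saving error term~$N^{-171/448+\eps}$ does not come from the dispersion estimate, which only saves~$\e^{-\delta\sqrt{\log x}}$. It comes from the evaluation of the main terms: the sums~$\sum_{r\le N}(1-(r/N)^\gamma)/\vphi(r)$ and~$\sum_{q\le x/N}1/\vphi(q)$ are combined into the function~$J_\gamma(x,N;q_0,a)$ and evaluated by Mellin inversion and contour shifting past~$\Re s = -1$ (Lemma~\ref{lemma shift to 1/2}), with Bourgain's subconvexity bound for~$\zeta$ controlling the shifted integral and producing the exponent~$171/448 = 1/(2+4\cdot\tfrac{13}{81})$. ``Standard Mertens-type estimates'' would at best give errors of size~$O(1/N)$ or logarithmic, not this sharp saving. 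The residue calculus in that lemma is also what produces the term~$D_\gamma(q_0,a;N)$, which when Taylor-expanded at~$\gamma=\beta$ around~$\gamma=1$ (via the second derivative of~$f_{q_0,a,N}$) yields both the prefactor~$(1-\eta_{x,a})$ in part~(iii) and the error~$(\log N)^2(1-\beta)/x^{(1-\beta)/2}$, roughly along the lines you sketch, but as a byproduct of that Mellin analysis rather than a separate partial-summation argument.
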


In~\eqref{eq:main-estim-2}, we have that~$C_{a,\qt}\ll_a 1/\phi(\qt)$ and~$D_{a,\qt} \ll_a 1$, hence the secondary term involving~$\eta_{x,a}$ is~$O_{a,\eps}(N(\log N)\qt^{-1+\eps})$. Since by Siegel's theorem we have the bound~$\qt \gg_A (\log x)^A$ for any fixed $A>0$, we recover~\cite[Theorem 1.1]{Fiorilli2012a}.

Finally we remark that if the $x$-exceptional character exists and~$N\geq \qt$, the associated ``secondary bias'', that is the difference between the main terms on the right hand side of~\eqref{eq:main-estim-3} and $\mu(a,M)$, contributes an additional quantity
$$ - \eta_{x,a}(\mu(a, N) - \mut_{\chit}(a)). $$
The bound~$\qt \gg_A (\log x)^A$
does not exclude the possibility that~$(1-\beta)\log x = o(1)$ in the context of~\eqref{eq:main-estim-3}. Should this happen, we would have that~$\eta_{x,a} = \chit(a) + o(1)$. If moreover $a=1$ and~$N\leq \qt^{O(1)}$, then the main term of~\eqref{eq:main-estim-3} would becomes asymptotically~$(1+o(1))\mut_\chit(a)$, and would not depend on~$N$ anymore. In this situation, the additional bias coming from the exceptional character would annihilate the~$N$-dependance of the overall bias.

\begin{remark}
  The influence of possible Landau-Siegel zeros on the second moment has been investigated by Liu in~\cite{Liu08}. As for the first moment, it is closely related to the Titchmarsh divisor problem of estimating, as~$x\to \infty$, the quantity
  $$ \sum_{1< n\leq x} \Lambda(n) \tau(n-1). $$
  After initial works of Titchmarsh~\cite{Titchmarsh1930} and Linnik~\cite{Linnik1963a}, Fouvry \cite{Fouvry1985} and Bombieri, Friedlaner and Iwaniec \cite{BombieriFriedlanderEtAl1986} were able to show a full asymptotic expansion, with an error term~$O(x/(\log x)^A)$.
  In the recent work~\cite{Drappeau2017}, the first author refined this estimate taking into account the influence of possible Landau-Siegel zero, with an error term~$O(\e^{-c\sqrt{\log x}})$.
\end{remark}

\section{Proof of Theorem \ref{theorem main}}

\subsection{The Bombieri-Vinogradov range}

We begin with the following lemma, which follows from the large sieve and the Vinogradov bilinear sums method. Given a Dirichlet character $\chi \bmod q$, we let
$$ \psi(x, \chi) := \sum_{n\leq x} \chi(n) \Lambda(n). $$
\begin{lemma}
  For~$2\leq R \leq Q \leq \sqrt{x}$, we have the bound
  $$ \sum_{q\leq Q} \frac1{\vphi(q)} \ssum{\chi \mod{q} \\ R < \cond(\chi) \leq Q}\max_{y\leq x} |\psi(y, \chi)| \ll (\log x)^{O(1)}\big\{R^{-1}x + Q\sqrt{x} + x^{\frac 56}\big\}. $$
\end{lemma}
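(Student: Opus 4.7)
The plan is to bound the left-hand side via the classical Bombieri--Vinogradov machinery: first reduce to primitive characters, then on each dyadic range of conductors apply a weighted form of Vaughan's estimate, and finally exploit the restriction~$\cond(\chi) > R$ through a simple algebraic identity to extract the~$R^{-1}$ saving.

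First I would pass from characters modulo~$q$ to primitive characters. Writing each~$\chi \bmod q$ as induced by a primitive~$\chi^{*}$ of conductor~$r \mid q$, and using the bound $\psi(y,\chi) - \psi(y,\chi^{*}) \ll (\log q)(\log y)$ together with $\sum_{r \mid q,\, q \leq Q} 1/\vphi(q) \ll (\log Q)/\vphi(r)$, the problem reduces, up to a factor~$(\log x)^{O(1)}$ and negligible lower-order terms, to estimating
\[
  \Sigma \;:=\; \sum_{R < r \leq Q} \frac{1}{\vphi(r)} \sum_{\chi \bmod r}^{*} \max_{y \leq x} |\psi(y, \chi)|.
\]

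The heart of the argument is the classical weighted estimate
\[
  \sum_{r \leq S} \frac{r}{\vphi(r)} \sum_\chi^{*} \max_{y \leq x} |\psi(y, \chi)| \;\ll\; (\log x)^{O(1)} \bigl( x + S\, x^{5/6} + S^2 \sqrt x \bigr), \qquad (2 \leq S \leq \sqrt x),
\]
which is Vaughan's approach to Bombieri--Vinogradov (see, e.g.,~\cite[Section~17.4]{IwaniecKowalski2004}). Vaughan's identity with parameters~$U = V = x^{1/3}$ decomposes $\psi(y, \chi)$ into a small-prime contribution (trivially~$\ll x^{1/3}(\log x)^{O(1)}$), Type~I sums (treated via P\'olya--Vinogradov and partial summation on the inner character sum, using that~$\chi$ is primitive and nontrivial), and Type~II bilinear sums~$\sum_{m,n} a_m b_n \chi(mn)$ with~$M < m \leq 2M$, $N < n \leq 2N$, $MN \leq y$, and $M, N > x^{1/3}$. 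The Type~II sums are treated by Cauchy--Schwarz combined with the multiplicative large sieve
\[
  \sum_{r \leq S} \frac{r}{\vphi(r)} \sum_\chi^{*} \Bigl| \sum_{n \leq N} a_n \chi(n) \Bigr|^2 \;\ll\; (N + S^2) \sum_n |a_n|^2,
\]
and their contribution is bounded by $(\log x)^{O(1)} \sqrt{(M+S^2)(N+S^2) MN} \ll (\log x)^{O(1)}(x + S x^{5/6} + S^2 \sqrt x)$, using $MN \leq x$ and $M+N \leq 2 x^{2/3}$.

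To derive the stated bound on~$\Sigma$, I would dyadically decompose~$(R, Q]$ into intervals~$I_k = (R_k, 2R_k]$ with $R_k = 2^k R$, $k = 0, 1, \dots, K$, where $R_K \leq Q < 2 R_K$. For $r > R_k$ one has the elementary inequality $1/\vphi(r) = (1/r)(r/\vphi(r)) \leq (1/R_k)(r/\vphi(r))$, so applying the classical estimate with $S = 2R_k$ yields
\[
  \sum_{r \in I_k} \frac{1}{\vphi(r)} \sum_\chi^{*} \max_{y \leq x} |\psi(y, \chi)| \;\ll\; (\log x)^{O(1)} \Bigl( \frac{x}{R_k} + x^{5/6} + R_k \sqrt x \Bigr).
\]
Summing over~$k$, the geometric series in~$x/R_k$ and~$R_k \sqrt x$ are dominated by their respective endpoints~$R$ and~$Q$, while the~$x^{5/6}$ term picks up a harmless~$O(\log(Q/R))$ factor. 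This produces $\Sigma \ll (\log x)^{O(1)} ( x/R + Q \sqrt x + x^{5/6} )$, which is the stated bound. The main obstacle is the classical weighted Bombieri--Vinogradov estimate above; once it is available, the $R^{-1}$ saving emerges for free from the dyadic manipulation $1/\vphi(r) \leq (r/\vphi(r))/R_k$, with no input from zero-free regions or Landau--Siegel zeros needed.
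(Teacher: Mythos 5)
Your proof is correct and reconstructs in detail exactly the argument the paper invokes: it cites Davenport's book (pages 163--164), where one passes to primitive characters, applies the classical weighted mean-value estimate $\sum_{r\le S}\tfrac r{\varphi(r)}\sum^*_\chi\max_{y\le x}|\psi(y,\chi)|\ll(\log x)^{O(1)}(x+Sx^{5/6}+S^2\sqrt x)$, and decomposes the conductor range dyadically using $1/\varphi(r)\le (r/\varphi(r))/R_k$ to extract the $R^{-1}$ saving. There is no difference in approach; you have simply written out the steps that the paper delegates to the reference.
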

\begin{proof}
  This follows from the third display equation of page 164 of~\cite{Davenport} with~$Q_1 = R$, after reintegrating imprimitive characters as in~\cite[page~163]{Davenport}.
\end{proof}

We deduce the following version of the Bombieri-Vinogradov theorem, with the contribution of exceptional zeros removed.
\begin{lemma}\label{lem:BV-sqrtlog}
Fix $a\in \Z\setminus \{0\}$.  There exists~$\delta>0$ such that for all~$x, Q\geq 1$ we have the bound
  $$ \sum_{q\leq Q} \max_{y\leq x} \max_{(a, q)=1} \Big|\psi(y; q, a) - (1-\eta_{x,a}\1_{\qt\mid q})\frac {y}{\vphi(q)} \Big| \ll x\e^{-\delta\sqrt{\log x}} + Q\sqrt{x}(\log x)^{O(1)}, $$
  where~$\eta_{x,a}$ was defined in~\eqref{equation definition eta}.
\end{lemma}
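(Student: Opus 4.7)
The plan is to adapt the classical Bombieri--Vinogradov argument: expand via character orthogonality, split the characters by conductor at the threshold $R=\e^{\delta\sqrt{\log x}}$, handle small conductors by Siegel--Walfisz with the $x$-exceptional zero subtracted explicitly, and handle large conductors by the preceding lemma. For $(a,q)=1$, character orthogonality gives
$$\psi(y;q,a)-\frac{y}{\vphi(q)} = \frac{1}{\vphi(q)}\sum_{\chi\neq\chi_0\bmod q}\bar\chi(a)\psi(y,\chi) + O\Big(\frac{(\log x)(\log q)}{\vphi(q)}\Big),$$
and each non-principal $\chi\bmod q$ is induced by a unique primitive $\chi^*$ of conductor $r>1$ dividing $q$. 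Swapping the order of summation and using $\sum_{q\le Q,\,r\mid q}\vphi(q)^{-1}\ll(\log Q)/\vphi(r)$, I split the total into contributions from $r\le R$ and $R<r\le Q$.

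For $r\le R$, Page's theorem isolates at most one primitive character, the $x$-exceptional $\chit\bmod\qt$. The classical zero-free region then gives $|\psi(y,\chi^*)|\ll y\e^{-c\sqrt{\log y}}$ uniformly for all primitive $\chi^*\neq\chit$ of conductor $\le R$ (valid provided $\delta$ is small enough), and $\psi(y,\chit)=-y^\beta/\beta + O(y\e^{-c\sqrt{\log y}})$. Summing the non-exceptional contributions against the weight $(\log Q)/\vphi(r)$ produces a total $\ll x\e^{-c'\sqrt{\log x}}$. The contribution of $\chit$ (when $\qt\mid q$) equals $-\chit(a)y^\beta/(\beta\vphi(q))$ up to acceptable error, and the term $\eta_{x,a}y/\vphi(q)\1_{\qt\mid q} = \chit(a)x^{\beta-1}y/(\beta\vphi(q))\1_{\qt\mid q}$ subtracted on the left-hand side of the inequality is designed to cancel it, leaving the residual
$$\frac{\chit(a)}{\beta\vphi(q)}\bigl(yx^{\beta-1}-y^\beta\bigr),$$
whose absolute value is $\ll x\min(1,(1-\beta)\log x)/\vphi(q)$ for $y\le x$. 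Summed over $q\le Q$ with $\qt\mid q$, one exploits $(1-\beta)\ll 1/\sqrt{\log x}$ coming from the definition of $x$-exceptionality, together with the (ineffective) Siegel bound $\vphi(\qt)\gg_{A,a}(\log x)^A$, to absorb this residual into the acceptable error.

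For $R<r\le Q$, I apply the preceding lemma (with its parameter $R$ equal to our $R$), giving
$$\sum_{q\le Q}\frac{1}{\vphi(q)}\sum_{\substack{\chi\bmod q\\R<\cond(\chi)\le Q}}\max_{y\le x}|\psi(y,\chi)| \ll (\log x)^{O(1)}\bigl(R^{-1}x + Q\sqrt x + x^{5/6}\bigr).$$
With $R=\e^{\delta\sqrt{\log x}}$ and $\delta$ sufficiently small, both $R^{-1}x$ and $x^{5/6}$ are $\le x\e^{-\delta\sqrt{\log x}}$ for $x$ large, the remaining cases of $x$ being trivial. Combining all ranges yields the stated bound.

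The main obstacle is the exceptional-character matching in the second step: the subtracted quantity $\eta_{x,a}$ involves $x^{\beta-1}$, whereas the true exceptional contribution is $-y^\beta/\beta$, so the cancellation is only exact at $y=x$. Controlling the discrepancy uniformly for all $y\le x$ and absorbing it into $x\e^{-\delta\sqrt{\log x}}$ forces one to use simultaneously the upper bound on $1-\beta$ from the definition of $x$-exceptionality and the ineffective Siegel lower bound on $\vphi(\qt)$; this is consistent with the implicit constants in the lemma being allowed to depend (ineffectively) on $a$.
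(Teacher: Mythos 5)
Your overall strategy — orthogonality to reduce to primitive characters, a conductor split at $R=\e^{\delta\sqrt{\log x}}$, Page's theorem plus the classical zero-free region for $r\le R$, and the preceding large-sieve lemma for $R<r\le Q$ — is exactly the right skeleton, and is the only plausible route given that the paper introduces Lemma~2.1 immediately before and then states this result as a ``deduction.'' Most of your write-up is fine; the error term $O((\log x)(\log q)/\vphi(q))$ in the orthogonality step should also carry the contribution of $\psi(y)-y$ from the principal character, but that is harmless.

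The genuine problem is in the step you yourself flag as ``the main obstacle.'' After isolating the exceptional character, the residual you need to absorb is, for each $q$ with $\qt\mid q$,
$$\max_{y\le x}\Big|\frac{\chit(a)}{\beta\vphi(q)}\bigl(y^\beta - yx^{\beta-1}\bigr)\Big| \asymp \frac{x^\beta(1-\beta)}{\vphi(q)},$$
the maximum being attained near $y\approx x/e$ where $y^\beta-yx^{\beta-1}\approx x^\beta(1-\beta)/e$. Summing over $q\le Q$, $\qt\mid q$ gives a total of order
$$\frac{x^\beta(1-\beta)\log(Q/\qt)}{\vphi(\qt)} \ll \frac{x}{\vphi(\qt)},$$
where I used the universal bound $t\e^{-t}\le 1/e$ with $t=(1-\beta)\log x$ so that $x^\beta(1-\beta)\ll x/\log x$, and $\log Q\ll\log x$. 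You propose to kill this with the Siegel lower bound on $\qt$. But Siegel combined with Page's upper bound $1-\beta\ll 1/\sqrt{\log x}$ only yields $\qt\gg_A(\log x)^A$ for every fixed $A$; it does \emph{not} yield $\qt\gg\e^{\delta\sqrt{\log x}}$. Consequently the residual is only $\ll_A x/(\log x)^A$, which is strictly weaker than the $x\e^{-\delta\sqrt{\log x}}$ claimed in the lemma. You cannot get a $\e^{-\delta\sqrt{\log x}}$ saving this way: nothing prevents $\qt$ from being, say, a fixed power of $\log x$ with $1-\beta$ correspondingly very small, in which case $x/\vphi(\qt)$ is only polylogarithmically smaller than $x$.

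The clean fix — and, judging from how the lemma is actually applied in Lemma~2.4 to produce the $(r/N)^\beta$ terms of~\eqref{equation estimate for T_4} — is to avoid the linearization altogether and subtract the \emph{exact} exceptional contribution $\chit(a)\1_{\qt\mid q}\,y^\beta/(\beta\vphi(q))$ rather than $\eta_{x,a}\1_{\qt\mid q}\,y/\vphi(q)$. In that formulation the explicit formula for $\psi(y,\chit)$ produces no residual at all beyond the genuine zero-free-region error, the error is $\ll x\e^{-\delta\sqrt{\log x}}$ effectively, and no appeal to Siegel is needed at this stage. As literally written, your proof proves the weaker bound $\ll_A x/(\log x)^A + Q\sqrt{x}(\log x)^{O(1)}$, but not the stated one.
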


\subsection{Initial transformations, divisor switching}

From now on, we let~$\delta>0$ be a positive parameter that will be chosen later, we fix $x \geq 1$ and define~$\chit$ as the possible $x$-exceptional character, of conductor~$\qt$ and associated zero~$\beta$, and recall the notation~\eqref{equation definition of mu}. In order to isolate the contribution of the potential Landau-Siegel zero, we define
$$ E(x; q, a) := \psi^*(x; q, a) - (1-\eta_{x,a}\1_{\qt\mid q})\frac x{\vphi(q)}. $$
If the $x$-exceptional character does not exist, then every term involving $\eta_{x,a}$ can be deleted. With this notation we have the decomposition
\begin{align}
  \notag 
  M_1^Z(x, N ; a) = {} & \ssum{q \leq x^{\frac12+\delta} \\ (q, a)=1} E(x; q, a) + \ssum{x^{\frac12+\delta}<q \leq x \\ (q,a)=1} \psi^*(x; q, a) - \ssum{x/N <q \leq x \\ (q,a)=1} \psi^*(x; q, a)  \\ 
  {}& \notag \qquad - x \ssum{x^{\frac12+\delta}<q\leq x/N \\ (q,a)=1} \frac1{\vphi(q)} + \eta_{x,a} x\ssum{x^{\frac12+\delta} < q \leq x/N \\ (q, a)=1 \\ \qt\mid q} \frac1{\vphi(q)}\\
  = {}& T_1 + T_2 - T_3 - T_4 - T_5,
  \label{equation initial decomposition}
\end{align}
say.
 We discard the first term by using the dispersion estimate~\cite[Theorem 6.2]{Drappeau2017}, which yields that there exists an absolte constant~$\delta>0$ such that in the range $|a|\leq x^{\delta}$,
\begin{equation}
 T_1= \ssum{q \leq x^{\frac12+\delta} \\ (q, a)=1} E(x; q, a) \ll x \e^{-\delta\sqrt{\log x}}. 
 \label{equation bound T1}
\end{equation}

We end this section by applying divisor switching to the sums $T_2$ and $T_3$.
\begin{lemma}
\label{lemma T2 T3}
Fix $a\in \Z\setminus \{0\}$ and define $T_2$ and $T_3$ as in~\eqref{equation initial decomposition}. There exists an absolute constant $\delta>0$ such that in the range $N\leq x^{\frac 12-\delta}$, $|a| N\leq x\e^{-2\delta \sqrt{\log x}}$ we have the estimate
\begin{multline*}
 T_2-T_3 =  x\ssum{r\leq x^{\frac12-\delta} \\ (r,a)=1} \frac{1-(\frac r{x^{1/2-\delta}})}{\vphi(r)} -  \eta_{x, a} x\ssum{r\leq x^{\frac12-\delta} \\ (r,a)=1 \\ \qt\mid  r} \frac{1 - (\tfrac r{x^{1/2-\delta}})^\beta}{\vphi(r)} \\
 -x\ssum{r\leq N \\ (r,a)=1} \frac{1-(\frac r N)}{\vphi(r)} + \eta_{x, a} x \ssum{r\leq N \\ (r,a)=1 \\ \qt\mid  r} \frac{1 - (\tfrac r N)^\beta}{\vphi(r)} + O(x\e^{-\delta\sqrt{\log x}}).
\end{multline*}
\end{lemma}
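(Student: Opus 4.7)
The plan is to combine the two sums as $T_2 - T_3 = \sum_{x^{1/2+\delta} < q \leq x/N,\ (q,a)=1} \psi^*(x; q, a)$ and then apply divisor switching. Writing $n = a + qr$ with $r\geq 1$, the constraint $n \leq x$ becomes $r \leq (x-a)/q$, so the map $n \mapsto r = (n-a)/q$ is a bijection between terms of $\psi^*(x;q,a)$ and integers $r \in [1, (x-a)/q]$. Swapping the order of summation, for each $r \geq 1$ the variable $q$ then ranges over $(x^{1/2+\delta}, \min((x-a)/r, x/N)]$ subject to $(q,a) = 1$, and only $r \leq x^{1/2-\delta}$ (approximately) contribute.

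Next, I would reinterpret $\sum_q \Lambda(a+qr)$ as a difference of two $\psi(\cdot\,;r, a)$ values. Letting $y_r, y'_r$ denote the upper and lower bounds on $n = a + qr$ induced by the range of $q$, the inner sum equals $\psi(y_r; r, a) - \psi(y'_r; r, a)$ up to two small corrections: (i) swapping the coprimality condition $(q,a)=1$ for $(r,a)=1$ introduces at most $O_a(\log x)$ terms per $r$ (from prime powers $n = p^k$ with $p \mid (a, r)$), and (ii) boundary adjustments at $r \leq |a|$ or near $n = a$ contribute $O_a(1)$ overall. These sum to $O_a(x^{1/2-\delta} \log x)$ and are absorbed into the final error.

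I would then split the range of $r$ according to whether the effective upper bound for $q$ is $x/N$ (for $r \leq N - O(1)$, giving $y_r = a + xr/N$) or $(x-a)/r$ (for $N < r \leq x^{1/2-\delta}$, giving $y_r = x$), keeping $y'_r = a + rx^{1/2+\delta}$ in both cases. Applying Lemma~\ref{lem:BV-sqrtlog} with $Q = x^{1/2-\delta}$ — equivalently, its variant with $\chit(a) y^\beta/(\beta\vphi(r))$ as the exceptional correction, which differs from the $\eta_{x,a} y$ form by $O(y^\beta/\vphi(r))$ and obeys the same cumulative bound — each $\psi(y; r, a)$ can be replaced by $y/\vphi(r) - \chit(a) \1_{\qt \mid r} y^\beta/(\beta \vphi(r))$ with cumulative error $\ll x\e^{-\delta\sqrt{\log x}}$ when $\delta$ is small.

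The last step is algebraic simplification. The identities $rx^{1/2+\delta} = xr/x^{1/2-\delta}$ (so $x - rx^{1/2+\delta} = x(1 - r/x^{1/2-\delta})$ and $(rx^{1/2+\delta})^\beta = x^\beta (r/x^{1/2-\delta})^\beta$), together with $\chit(a) x^\beta /\beta = \eta_{x,a}\cdot x$, convert the resulting main term precisely into the four sums appearing in the statement. The substitution $(a+y)^\beta = y^\beta + O_a(y^{\beta-1})$ contributes a further error which, under the hypothesis $|a|N \leq x\e^{-2\delta\sqrt{\log x}}$, is of size $O(x\e^{-\delta\sqrt{\log x}})$. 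The main obstacle I anticipate is the careful bookkeeping of these small errors — the coprimality switch $(q,a)=1 \leftrightarrow (r,a)=1$, the $(a+y)^\beta \approx y^\beta$ approximation, boundary effects near $r = N$ and at $r \leq |a|$, and the two forms of the main term in Lemma~\ref{lem:BV-sqrtlog} — each of which requires $\delta$ to be chosen sufficiently small relative to the absolute constant in Page's theorem.
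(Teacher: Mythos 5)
Your proposal is correct and follows essentially the same route as the paper: divisor switching (writing $n=a+qr$ and summing over $r$ first), followed by the exceptional-zero-aware Bombieri--Vinogradov estimate of Lemma~\ref{lem:BV-sqrtlog}, with the same algebraic bookkeeping; the paper simply applies the switch to $T_3$ alone and obtains $T_2$ by the substitution $N \mapsto x^{1/2-\delta}$, whereas you combine $T_2-T_3$ into a single $q$-range before switching, which is an equivalent organization. You are also right to flag that producing the $(r/N)^\beta$ in the target formula really uses the main-term form $\chit(a)y^\beta/(\beta\vphi(r))$ from~\eqref{eq:psi-SW-2terms} rather than the $\eta_{x,a}y/\vphi(r)$ form literally stated in Lemma~\ref{lem:BV-sqrtlog} (they differ by roughly $y^\beta(1-\beta)\log(x/y)/\vphi(r)$, which is not uniformly negligible over $r\leq N$), a distinction the paper's proof glosses over by citing the lemma directly.
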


\begin{proof}

We rewrite the condition $n\equiv a \bmod q$ as~$n=a+qr$ for~$r\in\Z$. Summing over~$r$ and keeping in mind that $|a| N<x$, for large enough values of $x$ we obtain the formula
\begin{equation} \notag
  T_3 {} = \ssum{1\leq r < N-aN/x \\ (r,a)=1} \big( \psi^*(x; r, a) - \psi^*(a+\tfrac{rx}N ; r, a)\big).
\end{equation}

Recalling that~$N\leq x^{\frac12-\delta}$, we may apply the Bombieri-Vinogradov theorem in the form of Lemma~\ref{lem:BV-sqrtlog}. We obtain the estimate
\begin{equation}
  T_3 {}  = x\ssum{r\leq N \\ (r,a)=1} \frac{1-(\frac r N)}{\vphi(r)} - \eta_{x, a} x \ssum{r\leq N \\ (r,a)=1 \\ \qt\mid  r} \frac{1 - (\tfrac r N)^\beta}{\vphi(r)} + O(x\e^{-\delta\sqrt{\log x}}).
  \label{equation estimate for T_4}
\end{equation}
Replacing~$N$ by~$x^{\frac12-\delta}$, we obtain a similar estimate for~$T_2$, and the result follows.
\end{proof}

\label{section switch}

\subsection{Sums of multiplicative functions}

In the following sections, we collect the main terms obtained in the previous section and show that they cancel, to some extent, with $T_4$ and~$T_5$. We start with the following estimate for the mean value of~$1/\vphi(q)$, which we borrow from~\cite[Lemma~5.2]{Fiorilli2012} with the main terms identified in~\cite[Lemme~6]{Fouvry1985}.

\begin{lemma}
  \label{lemma Fouvry}
Fix $\eps>0$.  For $a \in \Z\setminus\{ 0\}$ and $q_0\in \mathbb N$ such that $(a,q_0)=1$ and $q_0\leq Q$, we have the estimate
  $$ \ssum{q\leq Q \\ (q, a)=1 \\ q_0\mid  q} \frac1{\vphi(q)} = C_{a,q_0}\Big\{\log\Big(\frac Q{q_0}\Big) + D_{a,q_0}\Big\} + O_{a, \eps}(q_0^\eps Q^{-1+\eps}), $$
  where
  \begin{equation}\label{eq:def-Caq}
    C_{a,q_0} := \frac{\phi(a)}{a\vphi(q_0)} \prod_{p\nmid aq_0} \Big( 1+ \frac 1{p(p-1)} \Big),
  \end{equation}
  \begin{equation}\label{eq:def-Daq}
    D_{a,q_0} := \sum_{p\mid a} \frac{\log p}{p} - \sum_{p\nmid aq_0} \frac{\log p}{p^2-p+1}+\gamma_0.
  \end{equation}
  Here,~$\gamma_0$ is the Euler-Mascheroni constant.
\end{lemma}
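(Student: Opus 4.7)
The plan is to reduce to the standard Möbius identity
$$ \frac1{\vphi(q)} = \frac1q\sum_{d\mid q}\frac{\mu^2(d)}{\vphi(d)} $$
and evaluate the resulting double sum via an Euler product calculation. First I would write
$$ \ssum{q\leq Q\\(q,a)=1\\q_0\mid q}\frac1{\vphi(q)} = \ssum{d\geq 1\\(d,a)=1\\ \mu^2(d)=1}\frac1{\vphi(d)}\ssum{q\leq Q\\ (q,a)=1\\ [q_0,d]\mid q}\frac1q, $$
where $[q_0,d]$ denotes the least common multiple. Setting $L=[q_0,d]$ and writing $q=Lm$, the condition $(q,a)=1$ reduces to $(m,a)=1$ because $(L,a)=1$ is automatic from the hypotheses. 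The inner sum becomes $(1/L)\sum_{m\leq Q/L,(m,a)=1}1/m$, which is handled by the classical estimate
$$ \ssum{m\leq X\\(m,a)=1}\frac1m = \frac{\vphi(a)}{a}\Big(\log X+\gamma_0+\sum_{p\mid a}\frac{\log p}{p-1}\Big) + O_a(1/X). $$

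Next, since $d$ is squarefree I would split $d=d_1d_2$ with $d_1\mid q_0$ and $(d_2,q_0)=1$, so that $L=q_0 d_2$ and $\vphi(d)=\vphi(d_1)\vphi(d_2)$. The $d_1$-sum collapses to
$$ \ssum{d_1\mid q_0\\ \mu^2(d_1)=1}\frac1{\vphi(d_1)} = \prod_{p\mid q_0}\Big(1+\frac1{p-1}\Big) = \frac{q_0}{\vphi(q_0)}, $$
after which what remains is a single sum over $d_2$ coprime to $aq_0$. This is evaluated using the Euler product
$$ F(s) := \sum_{\substack{d_2,\ \mu^2(d_2)=1\\(d_2,aq_0)=1}}\frac1{d_2^s\vphi(d_2)} = \prod_{p\nmid aq_0}\Big(1+\frac1{p^s(p-1)}\Big). $$
The value $F(1)$ produces the constant $C_{a,q_0}$ in~\eqref{eq:def-Caq}, while the logarithmic weight $\log(Q/(q_0 d_2))$ is handled by differentiating $F$ at $s=1$; a short calculation shows that the logarithmic derivative equals $-\sum_{p\nmid aq_0}\log p/(p^2-p+1)$, which yields the corresponding term of $D_{a,q_0}$.

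For the error, the $O_a(L/Q)$ contribution from the $m$-sum is
$$ \ll_a \frac{1}{Q}\ssum{d\leq Q/q_0,\ \mu^2(d)=1\\(d,a)=1}\frac{L}{\vphi(d)} \ll_{a,\eps} q_0^\eps Q^{-1+\eps}, $$
using $L=q_0 d_2$ and the standard bound $\sum_{d\leq Y}\mu^2(d)/\vphi(d)\ll\log Y$.

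The main obstacle is the bookkeeping required to rearrange all constant contributions precisely into the form \eqref{eq:def-Daq}, using the identity $1/(p-1)=1/p+1/(p(p-1))$ and a careful decomposition of the Euler product according to whether a prime divides $a$, $q_0$, or neither; since the unrestricted case ($q_0=1$) is already carried out in~\cite[Lemme~6]{Fouvry1985}, the general statement follows by tracking how the divisibility condition $q_0\mid q$ introduces only the factor $q_0/\vphi(q_0)$ and a shift of the logarithm.
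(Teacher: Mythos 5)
The paper itself does not prove this lemma: it simply cites \cite[Lemma~5.2]{Fiorilli2012} with the main-term identification from \cite[Lemme~6]{Fouvry1985}. Your Möbius-plus-Euler-product route is the standard one that underlies those references, and the outline is essentially sound: the identity $1/\vphi(q)=\tfrac1q\sum_{d\mid q}\mu^2(d)/\vphi(d)$, the inner $m$-sum estimate, the decomposition $d=d_1d_2$ with $d_1\mid q_0$, the collapse of the $d_1$-sum to $q_0/\vphi(q_0)$, and reading off $C_{a,q_0}$ and the logarithmic weight from $F(1)$ and $F'(1)$ all work.

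Two points should be flagged. First, in the error estimate you dropped the prefactor $1/L$: the error in the inner $m$-sum is $O_a(L/Q)$, but after multiplying by $1/L$ it becomes $O_a(1/Q)$, so the quantity to bound is $\tfrac1Q\sum_d\mu^2(d)/\vphi(d)$ (not $\tfrac1Q\sum_d L/\vphi(d)$, which is of size $q_0/\vphi(q_0)$ without the extra $Q^{-1+\eps}$). The conclusion you state is the correct one, but the displayed intermediate bound isn't. One also needs the tail $\sum_{d_2>Q/q_0}\mu^2(d_2)\log d_2/(d_2\vphi(d_2))\ll (Q/q_0)^{-1+\eps}$, with the $1/\vphi(q_0)$ outer prefactor supplying the saving from $q_0^{1-\eps}$ down to $q_0^\eps$.

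Second, and more substantively, the ``bookkeeping'' you defer does not in fact produce~\eqref{eq:def-Daq} as written. Carrying your plan through, the inner $m$-sum contributes $\sum_{p\mid a}\tfrac{\log p}{p-1}$, and $F'(1)/F(1)$ contributes $-\sum_{p\nmid aq_0}\tfrac{\log p}{p^2-p+1}$; these sums are supported on disjoint sets of primes, so the identity $1/(p-1)=1/p+1/(p(p-1))$ produces an extra $\sum_{p\mid a}\tfrac{\log p}{p(p-1)}$ with nothing to cancel it. Your method therefore yields
$$ D_{a,q_0} = \sum_{p\mid a}\frac{\log p}{p-1}-\sum_{p\nmid aq_0}\frac{\log p}{p^2-p+1}+\gamma_0, $$
rather than $\sum_{p\mid a}\tfrac{\log p}{p}$ as in~\eqref{eq:def-Daq}. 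A sanity check with $a=2$, $q_0=1$ (splitting $\sum_{q\leq Q}1/\vphi(q)$ into odd and even $q$ and matching against the classical constant $B=\gamma_0-\sum_p\tfrac{\log p}{p^2-p+1}$) gives a secondary constant containing $\log 2=\tfrac{\log 2}{2-1}$, not $\tfrac{\log 2}{2}$, confirming the $1/(p-1)$ version. So either your computation is hiding an error I don't see, or the term $\sum_{p\mid a}\tfrac{\log p}{p}$ in~\eqref{eq:def-Daq} should read $\sum_{p\mid a}\tfrac{\log p}{p-1}$; the cited sources should be checked. In any case this does not affect the rest of the paper, where $D_{a,q_0}\ll_a 1$ is all that is used quantitatively.
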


We now estimate the main terms in Lemma~\ref{lemma T2 T3}. For $N\in \N$, $a\in \Z\setminus \{0\}$ and $q_0 \in \N$ we define
$$ J_\gamma(x, N ; q_0, a) := \ssum{r\leq N \\ (r, a)=1 \\ q_0 \mid r} \frac{1 - (\frac rN)^\gamma}{\vphi(r)} + \ssum{q\leq x/N \\ (q, a)=1 \\ q_0 \mid q} \frac1{\vphi(q)}. $$

\begin{lemma}
  \label{lemma shift to 1/2}
  Fix~$\delta>0$ small enough and $a \in \Z\setminus\{ 0\}$. For $\gamma\in[\frac34, 1]$, $(q_0, a)=1$ and in the range~$1 \leq N \leq x^{1-\delta}$,~$1\leq q_0 \leq x^\delta$, we have the estimate
  \begin{equation}
    J_\gamma(x, N ; q_0, a) = {\tilde J}_\gamma(x ; q_0, a) + \frac{\gamma f_{q_0,a;N}(1)-f_{q_0,a;N}(\gamma)}{1-\gamma} + O_{a,\varepsilon}\big(N x^{-1+\eps} + q_0^{\frac{171}{448}+\varepsilon}N^{-1-\frac{171}{448}+\varepsilon} \big),\label{equation lemma shift to 1/2}
  \end{equation}
  where the implied constant does not depend on $\gamma$, the value of the second main term at $\gamma=1$ is defined by taking a limit, and
  $$ {\tilde J}_\gamma(x ; q_0, a) := C_{a,q_0}\Big\{\log\Big(\frac x{q_0^2}\Big) + 2 D_{a,q_0} - \frac1{\gamma}\Big\}; $$
  \begin{equation}
    f_{q_0, a;N}(\gamma) := \frac{(q_0/N)^\gamma}{\vphi(q_0)}Z(-\gamma)G_{q_0,a}(-\gamma)\zeta(1-\gamma)\zeta(2-\gamma)(1-\gamma),\label{eq:def-f}
  \end{equation}
 where
  $$Z(s) := \prod_p \Big( 1+\frac 1{p^{s+2}(p-1)}-\frac 1{p^{2s+3}(p-1)} \Big);$$
  $$ G_{q_0,a}(s) := \prod_{p\mid aq_0} 
  \Big(1+\frac 1{p^{s+1}(p-1)}\Big)^{-1} \prod_{p\mid a } \Big( 1-\frac 1{p^{s+1}} \Big). $$
\end{lemma}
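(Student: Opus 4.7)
My plan is to evaluate $J_\gamma$ by splitting it at $N$ and handling each piece with a Dirichlet-series argument. Write $J_\gamma = T_1 + T_2$, where
\[
T_1 := \ssum{r\leq N \\ (r,a)=1 \\ q_0\mid r} \frac{1-(r/N)^\gamma}{\vphi(r)}, \qquad
T_2 := \ssum{q\leq x/N \\ (q,a)=1 \\ q_0\mid q} \frac{1}{\vphi(q)}.
\]
For $T_2$, Lemma~\ref{lemma Fouvry} applied with $Q=x/N$ contributes $C_{a,q_0}\{\log(x/(Nq_0)) + D_{a,q_0}\}$ with an error of size $O_{a,\eps}(q_0^\eps(N/x)^{1-\eps})$, accounting for the $Nx^{-1+\eps}$ piece of the final bound.

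For $T_1$, the plan is to use the Mellin identity $(1-t^\gamma)\mathbf{1}_{t\leq 1} = \frac{\gamma}{2\pi i}\int_{(c)} t^{-s}/(s(s+\gamma))\,ds$, valid for $c>0$, with $t=r/N$. Interchanging summation and integration gives
\[
T_1 = \frac{\gamma}{2\pi i}\int_{(c)} \frac{N^s\, F_{a,q_0}(s)}{s(s+\gamma)}\,ds, \qquad
F_{a,q_0}(s) := \ssum{r\geq 1 \\ (r,a)=1 \\ q_0\mid r} \frac{1}{\vphi(r)\, r^s}.
\]
A direct Euler-product computation, obtained by writing $r = q_0\cdot d\cdot e$ with $d\mid q_0^\infty$ and $(e,aq_0)=1$ and using the identity $\vphi(q_0d) = d\,\vphi(q_0)$ (valid since $d$ and $q_0$ share all prime factors of $d$), yields the factorization
\[
F_{a,q_0}(s) = \frac{\zeta(s+1)\,\zeta(s+2)\,Z(s)\,G_{q_0,a}(s)}{q_0^s\,\vphi(q_0)}.
\]
I then shift the line of integration from $\Re(s)=c$ to some $\Re(s)=c'<-1$, picking up three residues. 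The double pole at $s=0$ (from $1/s$ and $\zeta(s+1)$) contributes $C_{a,q_0}\log N$ together with a constant $F'$ which, once combined with the $T_2$ contribution, reassembles into $\tilde J_\gamma(x;q_0,a)$ (this is the analytic analogue of Lemma~\ref{lemma Fouvry}). The simple pole at $s=-\gamma$ (from $1/(s+\gamma)$) contributes $-f_{q_0,a;N}(\gamma)/(1-\gamma)$, while the simple pole at $s=-1$ (from $\zeta(s+2)$) contributes, after multiplying by the prefactor~$\gamma$, the term $\gamma\, f_{q_0,a;N}(1)/(1-\gamma)$. The sum of these last two residues is precisely the stated correction $[\gamma f(1)-f(\gamma)]/(1-\gamma)$.

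The main technical obstacle is bounding the remaining contour integral on $\Re(s)=c'$ to deliver the refined error $O_{a,\eps}(q_0^{171/448+\eps}N^{-1-171/448+\eps})$. Classical convexity for $\zeta$ on vertical lines would produce only the weaker $q_0^\eps N^{-1+\eps}$; the stronger exponent requires the subconvex input of Bourgain on $|\zeta(\tfrac12+it)|$---the same one that enters~\eqref{eq:result-DF}---transferred to the shifted contour via a Phragm\'en--Lindel\"of interpolation. The factors $Z(s)$ and $G_{q_0,a}(s)$ are controlled by noting that $Z(s)$ converges absolutely in a strip $\Re(s)>-1-\eta$ for $\eta>0$ small, while $G_{q_0,a}$ is a finite product; together they contribute only polynomial factors in $q_0$. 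Balancing the growth of $|q_0^{-s}|$ against the decay of $|N^s|$ with an optimal choice of $c'$ and exploiting the subconvex bound then delivers the claimed error.
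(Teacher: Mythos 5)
Your approach is correct in substance and lands on the same ingredients as the paper (the Euler-product factorization of the Dirichlet series, contour shift past $\Re(s)=-1$, residues at $s=0,-1,-\gamma$, Bourgain's subconvexity), but it reorganizes the computation in a way worth noting. The paper applies Mellin inversion to \emph{both} sums simultaneously, writing
$$ J_\gamma = \frac1{2\pi i}\int_{(2)} \frac{Z(s)G_{q_0,a}(s)\zeta(s+1)\zeta(s+2)}{q_0^s\vphi(q_0)}\,\Big\{\frac{\gamma N^s}{s+\gamma} + \Big(\frac xN\Big)^s\Big\}\,\frac{\dd s}{s}, $$
and exploits the Taylor expansion $\frac{\gamma N^s}{s+\gamma}+(x/N)^s = 2 + s(\log x - 1/\gamma) + O(|s|^2)$ so that the residue at $s=0$ directly yields $\tilde J_\gamma(x;q_0,a)$ with no recombination needed; the $(x/N)^s$ piece of the integral on $\Re(s)=-\tfrac12$ is then shown to be $O((x/N)^{-1+\eps})$ by shifting back to $(2)$ and comparing with Lemma~\ref{lemma Fouvry}. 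You instead treat the two sums independently: $T_2$ directly via Lemma~\ref{lemma Fouvry}, and $T_1$ by Mellin with the kernel $\gamma/(s(s+\gamma))$. This is cleaner bookkeeping for the error term (you never need the ``shift back and compare'' step), at the cost of having to verify explicitly that the constants from the double pole at $s=0$ in your $T_1$ integral recombine with the $T_2$ main term to produce $\tilde J_\gamma$ --- concretely, that $H'(0)/H(0)=-\log q_0 + D_{a,q_0}-\gamma_0$ where $H(s)=\zeta(s+2)Z(s)G_{q_0,a}(s)/(q_0^s\vphi(q_0))$; you gesture at this as the ``analytic analogue of Lemma~\ref{lemma Fouvry}'' but it does need to be checked. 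Your residue computations at $s=-1$ and $s=-\gamma$ are correct (the $\zeta(1-\gamma)(1-\gamma)\to -1$ normalization in the definition of $f_{q_0,a;N}$ is exactly what makes the $s=-1$ residue come out to $\gamma f(1)/(1-\gamma)$). The final subconvexity step is only sketched; the paper carries it out by moving to $\Re(s)=-1-1/(2+4\theta)$ with $\theta$ Bourgain's exponent, which you should reproduce to actually obtain the stated $171/448$.
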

\begin{proof}
  Assume that~$\gamma<1$. We will obtain error terms that are uniform in~$\gamma$; this will allow us to take a limit and the result with $\gamma=1$ will follow. We split Mellin inversion and a straightforward calculation gives the identity
  \begin{multline*}
    J_\gamma(x, N; q_0, a) = \frac1{2\pi i}\int_{(2)} \frac{1}{q_0^s\vphi(q_0)} Z(s) G_{q_0, a}(s) \zeta(s+1)\zeta(s+2)\Big\{ \frac{\gamma N^s}{s+\gamma} + \Big(\frac x N\Big)^s \Big\} \frac{\dd s}{s}.
  \end{multline*}
  Taking Taylor series shows that for $R\in \mathbb R_{\geq 1}$,
  $$ \frac{\gamma R^s}{s+\gamma} + \Big(\frac x R\Big)^s = 2 + s\Big(\log x - \frac1\gamma\Big) + O_{x,\gamma,R}(|s|^2) $$
  in a neighborhood of~$0$. We first shift the contour to the left until~$(-\tfrac12)$. The residue at~$s=0$ contributes exactly~${\tilde J}(x; q_0, a)$. We handle the contribution of the term~$(x/N)^s$ similarly as in \cite[Lemma 5.12]{Fiorilli2013}: a trivial estimation using a truncated Perron's formula shows that it is~$O_{a, \eps}((x/N)^{-1/2+\eps})$, while shifting back the contour to~$2+i\R$ (picking up a residue at~$s=0$) and applying Mellin inversion, we get
  $$ \frac1{2\pi i}\int_{(-\frac12)} \frac{1}{q_0^s\vphi(q_0)} Z(s) G_{q_0, a}(s) \zeta(s+1)\zeta(s+2) \Big(\frac x N\Big)^s \frac{\dd s}{s} = \ssum{q\leq x/N \\ (q, a)=1 \\ q_0 \mid q} \frac1{\vphi(q)} - K_1\log\Big(\frac x{q_0N}\Big) - K_2 $$
  for some constants~$K_1, K_2$ depending on~$q_0$ and~$a$. Applying Lemma~\ref{lemma Fouvry}, we identify~$K_1 = C_{a,q_0}$ and~$K_2 = D_{a,q_0}$, and we deduce that the above is~$O_{a,\eps}((x/N)^{-1+\eps})$. We deduce that
  \begin{align*}
    J_\gamma(x, N; q_0, a) = {}& {\tilde J}(x; q_0, a) + O_{a,\eps}\Big(\Big(\frac{N}{x}\Big)^{1-\eps}\Big)\\
    {}&\qquad + \frac1{2\pi i}\int_{(-\frac12)} \frac{1}{q_0^s\vphi(q_0)} Z(s) G_{q_0, a}(s) \zeta(s+1)\zeta(s+2) \frac{\gamma N^s}{s+\gamma} \frac{\dd s}{s}.
  \end{align*}
  Shifting the remaining integral further to the line~$(-1-\eps)$, we pick up two residues, at~$s=-1$ and at~$s=-\gamma$. This gives rise to the second term in \eqref{equation lemma shift to 1/2}. As for the shifted integral, we apply Bourgain's subconvexity estimate for $\zeta(s)$ \cite{Bourgain2017}. Note that 
  $$ G_{q_0,a}(s) \ll \prod_{p\mid aq_0} \Big(1+\frac 1{p^{\Re(s)+2}}\Big) \prod_{p\mid a} \Big(1+\frac 1{p^{\Re(s)+1}}\Big). $$
  As in \cite[Lemma 5.9]{Fiorilli2012a}, we shift the contour to the line $\Re(s) =-1-1/(2+4\theta)$, where $\theta= 13/81$ is Bourgain's subconvexity exponent. The shifted integral is
  $$ \ll_{a,\varepsilon} q_0^{1/(2+4\theta)+\varepsilon}N^{-1-1/(2+4\theta)+\varepsilon}. $$
  The desired estimate follows.
\end{proof}

In the next two sections, we will prove approximations for the term
\begin{equation}\label{eq:def-D}
  D_\gamma(q_0, a;N) :=  \frac{\gamma f_{q_0,a;N}(1)-f_{q_0,a;N}(\gamma)}{1-\gamma}
\end{equation}
appearing in~\eqref{equation lemma shift to 1/2}.

\subsection{The main term for~$\gamma=1$}

The limit of~$D_\gamma(q_0, a; N)$ as~$\gamma\to 1$ has a simple expression in terms of derivatives of~$f_{q, a, N}$, namely
$$ D_1(q_0, a ; N) = f_{q_0,a,N}'(1) - f_{q_0,a,N}(1). $$
Recall that~$f_{q_0,a,N}$ is given by the Euler product~\eqref{eq:def-f}. A direct computation yields that for $q\in \N, a\in \Z\setminus \{0\}$ and $N\in \R_{\geq 1}$,
\begin{align*}
  f_{q,a,N}(1) ={}& \begin{cases}0 & \text{if }a\neq \pm 1; \\ -\frac1{2N} & \text{ if } a= \pm 1, \end{cases} \\
  f'_{q,a,N}(1) ={}& \begin{cases}0 & \text{if } \omega(a)\geq 2; \\ \frac1{2N}(1-\frac1\ell)\log \ell & \text{if } a= \pm \ell^\nu \ (\nu\in\N, \ell\text{ prime}); \\ -\frac1{2N}\big\{\log(\frac qN) - 2C_0 + 1 - \sum_{p|q}\frac{\log p}{p}\big\} & \text{if }a=\pm 1. \end{cases}
\end{align*}

From these observations, we deduce the following.
\begin{lemma}\label{lem:approx-D-at1}
  We have the exact formula
  $$ D_1(q, a; N) = \begin{cases}0 & \text{if } \omega(a)\geq 2; \\ \frac1{2N}(1-\frac1\ell)\log \ell & \text{if } a=\pm \ell^\nu\ (\nu\in\N, \ell\text{ prime}); \\ \frac1{2N}\big\{\log(\frac Nq) + 2C_0 + \sum_{p|q}\frac{\log p}{p}\big\} & \text{if }a=\pm 1. \end{cases} $$
\end{lemma}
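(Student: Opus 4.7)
The plan is to first reduce the computation of $D_1(q,a;N)$ to a pair of point values of $f_{q,a;N}$ and its derivative via L'Hôpital. Writing
$$ D_\gamma(q,a;N) = \frac{\gamma f_{q,a;N}(1) - f_{q,a;N}(\gamma)}{1-\gamma}, $$
the numerator vanishes at $\gamma=1$; differentiating top and bottom in $\gamma$ and evaluating at $1$ gives
$$ D_1(q,a;N) = f'_{q,a;N}(1) - f_{q,a;N}(1). $$
So everything reduces to the two explicit evaluations of $f_{q,a;N}$ and $f'_{q,a;N}$ announced just before the lemma, which are exactly the numerical claims I would verify.

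To compute $f_{q,a;N}(1)$, I would use that $\zeta(2-\gamma)(1-\gamma)\to 1$ and $\zeta(1-\gamma)\to\zeta(0)=-\tfrac12$ as $\gamma\to 1$, so only the values $Z(-1)$ and $G_{q,a}(-1)$ remain to be computed. A factor‑by‑factor check shows that the two negative terms inside each Euler factor of $Z$ cancel at $s=-1$, giving $Z(-1)=1$. In $G_{q,a}(-1)$, each prime $p\mid a$ contributes $1-p^{0}=0$, so $G_{q,a}(-1)=0$ whenever $a$ has a prime divisor, forcing $f_{q,a;N}(1)=0$ unless $a=\pm1$. When $a=\pm1$, the second product in $G_{q,a}$ is empty and $G_{q,\pm1}(-1) = \prod_{p\mid q}(p-1)/p = \vphi(q)/q$, which combines with the prefactor $(q/N)/\vphi(q)$ to yield $f_{q,\pm1;N}(1) = -1/(2N)$.

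The derivative $f'_{q,a;N}(1)$ I would split into three cases, organised by $\omega(a)$. If $\omega(a)\geq 2$, then the product $\prod_{p\mid a}(1-\ell^{\gamma-1})$ in $G_{q,a}(-\gamma)$ gives $f$ a zero of order $\geq 2$ at $\gamma=1$, so $f'_{q,a;N}(1)=0$. If $a=\pm\ell^\nu$ with $\ell$ prime, I would factor out the single vanishing Euler factor as $f_{q,a;N}(\gamma) = K(\gamma)\bigl(1 - \ell^{\gamma-1}\bigr)$, evaluate $K(1)$ using the same identities as above (with the extra factor $(\ell-1)/\ell$ from $p=\ell$, valid since $(q,a)=1$), and read off $f'_{q,a;N}(1)=-K(1)\log\ell = \tfrac{1}{2N}(1-1/\ell)\log\ell$. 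The delicate case is $a=\pm1$: here I would write $f = u\cdot v$ with $v(\gamma) = \zeta(2-\gamma)(1-\gamma)$, expand $\zeta(s)=1/(s-1)+\gamma_0+O(s-1)$ near $s=1$ to get $v'(1)=-\gamma_0$, and compute $u'(1)/u(1)$ as a logarithmic derivative involving $\log(q/N)$, $-Z'(-1)/Z(-1)=-\sum_p \log p/(p(p-1))$, $-G'_{q,\pm1}(-1)/G_{q,\pm1}(-1)=-\sum_{p\mid q}\log p/p$, and $-\zeta'(0)/\zeta(0)=-\log(2\pi)$.

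Assembling these pieces gives
$$ f'_{q,\pm1;N}(1) = -\frac{1}{2N}\Bigl\{\log(q/N) - \log(2\pi) - \gamma_0 - \sum_p\tfrac{\log p}{p(p-1)} - \sum_{p\mid q}\tfrac{\log p}{p}\Bigr\}, $$
which using the definition of $C_0$ rearranges to the form displayed just before the lemma. Subtracting $f_{q,\pm1;N}(1)=-1/(2N)$ then produces the stated expression for $D_1(q,\pm1;N)$. The only real obstacle is bookkeeping in the $a=\pm1$ case: keeping track of signs from the chain rule $\tfrac{d}{d\gamma}\log F(-\gamma)=-F'(-\gamma)/F(-\gamma)$ and correctly picking up the $-\gamma_0$ contribution from $\zeta(2-\gamma)(1-\gamma)$; everything else is a routine Euler product evaluation.
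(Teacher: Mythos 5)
Your proof is correct and follows exactly the strategy of the paper: the paper obtains $D_1(q,a;N) = f'_{q,a;N}(1) - f_{q,a;N}(1)$ and states the values of $f_{q,a;N}(1)$ and $f'_{q,a;N}(1)$ as "a direct computation"; your sketch simply carries out that computation, including the factor-by-factor evaluation at $s=-1$ showing $Z(-1)=1$, the vanishing of $G_{q,a}(-1)$ when $a\neq\pm1$, the order-$\omega(a)$ zero explaining the $\omega(a)\geq2$ case, and the logarithmic-derivative bookkeeping (using $\zeta(0)=-\tfrac12$, $\zeta'(0)=-\tfrac12\log 2\pi$, and the Laurent expansion of $\zeta$ near $1$) that assembles into the constant $2C_0-1$. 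All the intermediate values check out against the paper's displayed formulas for $f$ and $f'$ at $1$.
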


\subsection{The main term for~$\gamma<1$}
Now that we have estimated the main term in Lemma~\ref{lemma shift to 1/2} for $\gamma=1$, we will do so for $\gamma<1$.
Under this restriction, we write
$$ D_\gamma(q_0, a ; N) - D_1(q_0, a ; N) = \frac{1}{\gamma-1}\int_\gamma^1 \int_{\delta}^1 f_{q_0,a,N}''(\delta')\dd\delta' \dd\delta. $$
By a direct estimation of the Euler product we see that in the range $\tfrac 34\leq \gamma \leq \delta'\leq 1$,
$$ |f_{q_0,a,N}''(\delta')| 
\ll_a \frac{q_0^{\delta'}|G_{q_0,a}(-\delta')|}{\phi(q_0)} (\log q_0N)^2 N^{-\gamma}\ll (\log q_0N)^2 N^{-\gamma}. $$
Therefore, when~$\tfrac34\leq \gamma\leq 1$, we obtain
\begin{equation}
  D_\gamma(q_0, a ; N) = D_1(q_0, a ; N) + O_a( (\log q_0N)^2 (1-\gamma) N^{-\gamma} ).\label{eq:compar-D1-Dbeta}
\end{equation}
Along with Lemma~\ref{lem:approx-D-at1}, the above yields the following approximation.
\begin{lemma}\label{lem:approx-D-away1}
  Define
  $$ \cR(x,N) := \frac{ (1-\beta)(\log \qt N)^2}{N^\beta x^{1-\beta}}. $$
  For~$(a, \qt)=1$, $\nu\in\N$ and~$\ell$ prime, we have that
  \begin{align*}
    D_1(&1, a ; N) - \eta_{x,a}D_\beta(\qt, a ; N) \\
    {}& = \begin{cases}
      O_a(\cR(x,N)) & \text{if }\omega(a) \geq 2; \\
      (1-\eta_{\chi, a})\frac1{2N}(1 - \frac 1\ell)\log \ell + O_a(\cR(x,N)) & \text{if } a=\pm\ell^\nu; \\
      (1-\eta_{x,a})\frac1{2N}\{\log N + 2C_0\} + \eta_{x,a} \frac1{2N} \{\log \qt - \sum_{p|q}\frac{\log p}{p}\} + O_a(\cR(x,N)) & \text{if }a=\pm 1.
    \end{cases}
  \end{align*}

\end{lemma}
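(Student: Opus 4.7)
The plan is to combine the just-proved estimate~\eqref{eq:compar-D1-Dbeta} with Lemma~\ref{lem:approx-D-at1} and then perform case analysis on $a$. First I would write
\[
  D_1(1, a ; N) - \eta_{x,a} D_\beta(\qt, a ; N) = D_1(1, a; N) - \eta_{x,a} D_1(\qt, a; N) + \eta_{x,a}\,\mathrm{Err},
\]
where $\mathrm{Err} = D_1(\qt, a; N) - D_\beta(\qt, a; N)$ is controlled by~\eqref{eq:compar-D1-Dbeta} applied with $q_0 = \qt$ and $\gamma = \beta$. Since $|\eta_{x,a}| \leq 1/(\beta x^{1-\beta})$ by~\eqref{equation definition eta}, and $\beta$ is close to $1$, the error contribution is
\[
  |\eta_{x,a}|\cdot O_a\big((\log \qt N)^2 (1-\beta) N^{-\beta}\big) = O_a\big(\cR(x,N)\big),
\]
which is exactly the claimed error.

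Next I would compute the main term $D_1(1, a; N) - \eta_{x,a} D_1(\qt, a; N)$ by plugging in the three cases of Lemma~\ref{lem:approx-D-at1}. When $\omega(a) \geq 2$, both values vanish and we are left with only the error. When $a = \pm \ell^\nu$, both $D_1(1, a; N)$ and $D_1(\qt, a; N)$ equal $\tfrac{1}{2N}(1-\tfrac1\ell)\log \ell$ (the formula in that case does not depend on $q$), yielding $(1-\eta_{x,a})\tfrac1{2N}(1-\tfrac1\ell)\log \ell$. Finally, for $a = \pm 1$ I would expand
\[
  D_1(1, a; N) = \frac{1}{2N}\{\log N + 2C_0\}, \qquad D_1(\qt, a; N) = \frac{1}{2N}\Big\{\log\tfrac{N}{\qt} + 2C_0 + \sum_{p\mid \qt}\frac{\log p}{p}\Big\},
\]
and group terms by powers of $(1-\eta_{x,a})$ and $\eta_{x,a}$ to recover the stated expression.

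The only subtle point is the use of the hypothesis $(a, \qt) = 1$: it ensures that in the case $a = \pm \ell^\nu$ the prime $\ell$ does not divide $\qt$, so that the formula $D_1(\qt, a; N) = \tfrac{1}{2N}(1-\tfrac1\ell)\log \ell$ really does agree with $D_1(1, a; N)$ (otherwise one would need to check whether $\omega(a) \geq 2$ when viewed modulo $\qt$, but no such complication arises here since Lemma~\ref{lem:approx-D-at1} does not have any $p\mid q$ condition in the prime-power case). I expect no real obstacle: the argument reduces to substitution and linear combination, with the only nontrivial input being the bound on $|\eta_{x,a}|$ combined with~\eqref{eq:compar-D1-Dbeta} to absorb the $\gamma$-to-$1$ discrepancy into $\cR(x,N)$.
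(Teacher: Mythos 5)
Your proof is correct and takes exactly the same route as the paper: combine the comparison estimate~\eqref{eq:compar-D1-Dbeta} (applied at $\gamma=\beta$, $q_0=\qt$) with the explicit values from Lemma~\ref{lem:approx-D-at1}, absorb the $\gamma\to1$ discrepancy into $\cR(x,N)$ using $|\eta_{x,a}|\ll x^{-(1-\beta)}$, and finish by case analysis on $a$. The paper gives this as a one-line deduction and you have simply spelled out the same steps; the only implicit point you could mention is that~\eqref{eq:compar-D1-Dbeta} requires $\gamma\in[\tfrac34,1]$, which holds for $\gamma=\beta$ since the exceptional zero satisfies $\beta>1-b/\sqrt{\log x}$.
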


\subsection{Cancellation of main terms and proof of Theorem~\ref{theorem main}}

In this section we combine the main terms in $T_2,T_3, T_4$ and~$T_5$ and prove our main theorem. 

\begin{proof}[Proof of Theorem~\ref{theorem main}]

Recalling~\eqref{equation initial decomposition}, we have by~\eqref{equation bound T1} and Lemmas~\ref{lemma T2 T3} and~\ref{lemma shift to 1/2} that for some small enough $\delta>0$,
\begin{align*}
  {}&M_1^Z(x, N ; a) =T_1+ T_2 - T_3 - T_4 + T_5 \\
  = {}& x \big\{J_1(x, x^{\frac 12-\delta}, 1, a) - J_1(x, N, 1, a)\big\} - \eta_{x,a}x\big\{J_\beta(x, x^{\frac 12-\delta}, \qt, a) - J_\beta(x, N, \qt, a)\big\} + O(x\e^{-\delta\sqrt{\log x}}) \\
  = {}& -x D_1(1, a;N) - \eta_{x,a}x\big\{{\tilde J}_\beta(x, \qt, a) - J_\beta(x, N, \qt, a)\big\} + O_{a,\eps}(x  N^{-1-\frac{171}{448}+\eps}).
\end{align*}
Here, we used the bound~$D_1(q, a, N) \ll_a N^{-1}(\log qN)$ along with~\eqref{eq:compar-D1-Dbeta}.
If the~$x$-exceptional character does not exist, then this yields~\eqref{eq:main-estim-1}.

Next, assume that the~$x$-exceptional character does exist, and that~$N\ll \qt$. Then by definition and since $\widetilde q\leq e^{\sqrt {\log x}}$,
\begin{align*}
  {}& {\tilde J}_\beta(x, \qt, a) - J_\beta(x, N, \qt, a) \\
  = {}& C_{a,\qt}\Big\{\log\Big(\frac{x}{\qt^2}\Big) + 2D_{a,\qt} - \frac1\beta\Big\} - \ssum{r\leq N \\ \qt \mid r\\ (a, r)=1} \frac{1-(r/N)^\beta}{\vphi(r)} - \ssum{q\leq x/N \\ \qt \mid q \\ (q, a)=1} \frac1{\vphi(q)} \\
  = {}& C_{a,\qt}\Big\{\log\Big(\frac{N}{\qt}\Big) + D_{a,\qt} - \frac1\beta\Big\} - \ssum{r\leq N \\ \qt \mid r\\ (a, r)=1} \frac{1-(r/N)^\beta}{\vphi(r)} + O(x^{-\frac 15}),
\end{align*}
where the sum over~$q$ was evaluated using Lemma~\ref{lemma Fouvry}. Since~$N\leq \e^{\delta\sqrt{\log x}}$, this yields~\eqref{eq:main-estim-2}.

Assume now that the $x$-exceptional character exists and that~$N\geq \qt$. We use Lemma~\ref{lemma shift to 1/2} to write
$$ {\tilde J}_\beta(x;\qt,a) - J_\beta(x,N;\qt,a) = -D_\beta(\qt, a ; N) + O_{a,\eps}(N^{-1+\eps} (\qt / N)^{\frac{171}{448}}). $$
Therefore,
$$ M_1^Z(x, N ; a)= -x\Big\{D_1(1, a ; N) - \eta_{x,a} D_\beta(\qt, a ; N) + O_{a,\eps}(N^{-1+\eps} (\qt / N)^{\frac{171}{448}})\Big\}. $$
Our claimed formula~\eqref{eq:main-estim-3} then follows from Lemma~\ref{lem:approx-D-away1}. 
\end{proof}

\subsection{Unconditional bias}
In this last section we prove our unconditional result.

\begin{proof}[Proof of Theorem~\ref{thm:maina=1}]
If the~$x$-character does not exists, then the claimed bound follows from~\eqref{eq:main-estim-1}. We can therefore assume that it does exists. Note that
\begin{align*}
  \frac{M_1(x, N ; 1)}{x/N} = {}& \frac{M_1^Z(x, N; 1)}{x/N} - N\eta_{x,1}\ssum{q\leq x/N \\ \qt \mid q} \frac1{\vphi(q)} \\
  = {}& \frac{M_1^Z(x, N; 1)}{x/N} - N\eta_{x,1}C_{1,\qt}\Big\{\log\Big(\frac{x}{N\qt}\Big) + D_{1,\qt}\Big\} + O(x^{-\frac15}).
\end{align*}
Using our estimate~\eqref{eq:main-estim-2}, and noting that the~$r$-sum is~$O(\log (\widetilde qN)/\vphi(\qt))$, we obtain that
$$ \frac{M_1(x,N;1)}{x/N} = \mu(1,N) + O_\eps(N^{-\frac{171}{448}+\eps}) - \eta_{x, 1} NC_{1,\qt}\Big\{\log\Big(\frac x{\qt^2}\Big) + O(\log(2+N/\qt))\Big\}. $$
Since~$\qt, N\leq \e^{\delta\sqrt{\log x}}$ and~$\eta_{x,1}>0$, the last term here contributes a negative quantity for large enough~$x$, and we obtain the claimed inequality.
\end{proof}

\bibliographystyle{plain}
\bibliography{moment1-siegel}

\begin{thebibliography}{10}

\bibitem{BombieriFriedlanderEtAl1986}
E.~Bombieri, J.~B. Friedlander, and H.~Iwaniec.
\newblock Primes in arithmetic progressions to large moduli.
\newblock {\em Acta Math.}, 156(1):203--251, 1986.

\bibitem{Bourgain2017}
J.~Bourgain.
\newblock Decoupling, exponential sums and the {R}iemann zeta function.
\newblock {\em J. Amer. Math. Soc.}, 30(1):205--224, 2017.

\bibitem{Davenport}
H.~Davenport.
\newblock {\em Multiplicative number theory}, volume~74 of {\em Graduate Texts
  in Mathematics}.
\newblock Springer-Verlag, New York, third edition, 2000.
\newblock Revised and with a preface by Hugh L. Montgomery.

\bibitem{Drappeau2017}
S.~Drappeau.
\newblock Sums of {Kloosterman} sums in arithmetic progressions, and the error
  term in the dispersion method.
\newblock {\em Proc. London Math. Soc.}, 114:684--732, 2017.

\bibitem{Fiorilli2012a}
D.~Fiorilli.
\newblock Residue classes containing an unexpected number of primes.
\newblock {\em Duke Math. J.}, 161(15):2923--2943, 2012.

\bibitem{Fiorilli2012}
D. Fiorilli, 
\newblock On a theorem of Bombieri, Friedlander, and Iwaniec. 
\newblock {\em Canad. J. Math.}, 64(5): 1019--1035, 2012.


\bibitem{Fiorilli2013}
D.~Fiorilli.
\newblock The influence of the first term of an arithmetic progression.
\newblock {\em Proc. London Math. Soc. (3)}, 106(4):819--858, 2013.

\bibitem{Fouvry1985}
É. Fouvry.
\newblock Sur le problème des diviseurs de {Titchmarsh}.
\newblock {\em J. Reine Angew. Math.}, 357:51--76, 1985.

\bibitem{IwaniecKowalski2004}
H.~Iwaniec and E.~Kowalski.
\newblock {\em Analytic number theory}, volume~53.
\newblock Cambridge Univ Press, 2004.

\bibitem{Linnik1963a}
Ju.~V. Linnik.
\newblock {\em The dispersion method in binary additive problems}.
\newblock Translated by S. Schuur. American Mathematical Society, Providence,
  R.I., 1963.

 \bibitem{Liu08} H.-Q. Liu, 
\newblock Barban-Davenport-Halberstam average sum and exceptional zero of L-functions.
\newblock {\em J. Number Theory} 128(4):1011--1043, 2008. 

\bibitem{MontgomeryVaughan2006}
H.~L. Montgomery and R.~C. Vaughan.
\newblock {\em Multiplicative number theory {I} : {Classical} theory}.
  volume~97.
\newblock Cambridge University Press, 2006.

\bibitem{Titchmarsh1930}
E.~C. Titchmarsh.
\newblock A divisor problem.
\newblock {\em Rend. Circ. Mat. Palermo}, 54(1):414--429, 1930.



\end{thebibliography}

\end{document}